\numberwithin{equation}{section}
\def\eqref#1{(\ref{#1})}
\def\1{\sqrt{-1}\:}
\newcommand{\cntrct}                
{\hspace{2pt}\raisebox{1pt}{\text{$\lrcorner$}}\hspace{2pt}}
\renewcommand{\phi}{\varphi}
\renewcommand{\epsilon}{\varepsilon}
\newcounter{Mycounter}[section]
\newcounter{lemma}[section]
\newcounter{claim}[section]
\newcounter{sublemma}[section]
\newcounter{corollary}[section]
\newcounter{theorem}[section]
\newcounter{conjecture}[section]
\newcounter{proposition}[section]
\newcounter{definition}[section]
\newcounter{example}[section]
\newcounter{remark}[section]
\newcounter{problem}[section]
\newcounter{question}[section]
\begin{document}
	
\newpage

\title[Darboux-Weinstein theorem for LCS manifolds]{Darboux-Weinstein theorem for locally conformally symplectic manifolds}
\author{Alexandra Otiman and Miron Stanciu}
\address{Institute of Mathematics ``Simion Stoilow'' of the Romanian Academy\\
	21, Calea Grivitei Street, 010702, Bucharest, Romania\\ {\em and}\\
	University of Bucharest, Faculty of Mathematics and Computer Science, 14 Academiei Str., Bucharest, Romania}
\email{alexandra\_otiman@yahoo.com and mirostnc@gmail.com}

\date{\today}

\abstract
A locally conformally symplectic (LCS) form is an almost symplectic form $\omega$ such that a closed one-form $\theta$ exists with $d\omega = \theta \wedge \omega$. We present a version of the well-known result of Darboux and Weinstein in the LCS setting and give an application concerning Lagrangian submanifolds. \\[.1in]

\noindent{\bf Keywords:} Locally conformally symplectic, Darboux-Weinstein theorem, lagrangian submanifold.\\
\noindent{\bf 2010 MSC: 53D05, 53D12}
\endabstract

\maketitle

\tableofcontents

\section{Introduction}

The aim of this note is to extend in a natural way the classical Darboux-Weinstein theorem, which we now recall (see e.g. \cite{mcduff}):

\begin{theorem} {\bf (Darboux-Weinstein)} 
\label{thm:DarbouxWeinstein}
Let $M$ be a manifold and $\omega_0, \omega_1$ closed $2$-forms on $M$. Let $Q \subset M$ be a compact submanifold such that $\omega_0$ and $\omega_1$ are nondegenerate and equal on $T_qM$ for all $q \in Q$.

Then there exist $\mathcal{N}_0, \mathcal{N}_1$ neighborhoods of $Q$ and $\varphi : \mathcal{N}_0 \to \mathcal{N}_1$ a diffeomorphism such that 
\[
\varphi^* \omega_1 = \omega_0 \ \ \text{and} \ \ \varphi_{|Q} = id.
\]
\end{theorem}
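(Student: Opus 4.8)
The plan is to prove this by \emph{Moser's method}, the standard mechanism underlying every Darboux-type statement. The crucial preliminary step is a \emph{relative Poincaré lemma}: on some neighborhood of $Q$ I want a $1$-form $\sigma$ with $d\sigma = \omega_1 - \omega_0$ and $\sigma|_Q = 0$. Write $\tau := \omega_1 - \omega_0$; this is a closed $2$-form, and since $\omega_0$ and $\omega_1$ agree on $T_qM$ for every $q \in Q$, the form $\tau$ vanishes along $Q$ (i.e. $\tau_q = 0 \in \Lambda^2 T_q^*M$ for $q\in Q$). Fixing a tubular neighborhood of $Q$ and the fiberwise scalings $\mu_t$, $t\in[0,1]$, with $\mu_1 = \id$ and $\mu_0$ the retraction onto $Q$, I set $\sigma := \int_0^1 \mu_t^*(\iota_{Y_t}\tau)\,dt$, where $Y_t$ is the ($t$-dependent) vector field generating $\mu_t$. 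The homotopy formula gives $d\sigma = \mu_1^*\tau - \mu_0^*\tau = \tau$; and since $\tau$ vanishes along $Q$ (equivalently, since $Y_t$ vanishes on the zero section), the integrand vanishes at points of $Q$, so $\sigma|_Q = 0$.

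Next I interpolate: put $\omega_t := (1-t)\omega_0 + t\,\omega_1 = \omega_0 + t\,\tau$ for $t\in[0,1]$. Each $\omega_t$ is closed, and at every point of $Q$ we have $\omega_t = \omega_0 = \omega_1$, which is nondegenerate. Nondegeneracy is an open condition and $Q$ is compact, so after passing to a smaller neighborhood $\mathcal{U}$ of $Q$ the form $\omega_t$ is nondegenerate on $\mathcal{U}$ for all $t\in[0,1]$ simultaneously. Then there is a unique time-dependent vector field $X_t$ on $\mathcal{U}$ with $\iota_{X_t}\omega_t = -\sigma$. Because $\sigma|_Q = 0$, we get $X_t|_Q = 0$; hence the points of $Q$ are stationary for the (local) flow $\varphi_t$ of $X_t$, and since $X_t$ vanishes on the compact set $Q$ its flow exists on $[0,1]$ on a possibly still smaller neighborhood $\mathcal{N}_0$ of $Q$.

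The conclusion is the usual Moser computation: using that $\omega_t$ is closed (so $\mathcal{L}_{X_t}\omega_t = d\iota_{X_t}\omega_t$ by Cartan's formula) and $\partial_t \omega_t = \tau$,
\[
\frac{d}{dt}\,\varphi_t^*\omega_t = \varphi_t^*\!\left(\mathcal{L}_{X_t}\omega_t + \tfrac{\partial}{\partial t}\omega_t\right) = \varphi_t^*\!\left(d\iota_{X_t}\omega_t + \tau\right) = \varphi_t^*\!\left(-d\sigma + \tau\right) = 0 .
\]
Thus $\varphi_1^*\omega_1 = \varphi_0^*\omega_0 = \omega_0$ on $\mathcal{N}_0$, and $\varphi := \varphi_1|_{\mathcal{N}_0}$ is a diffeomorphism onto its image $\mathcal{N}_1$ with $\varphi|_Q = \id$, as desired.

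The step that needs genuine care, rather than bookkeeping, is the relative Poincaré lemma: one must obtain a primitive that actually vanishes on $Q$, which is exactly why the homotopy operator has to be built from a deformation retraction onto $Q$ (not onto a point). The remaining points — nondegeneracy of $\omega_t$ on a single neighborhood uniformly in $t$, and existence of the flow for all $t\in[0,1]$ — are routine compactness-and-shrinking arguments. In the LCS setting of this paper one expects the same scheme to go through after replacing $d$ by the twisted differential $d_\theta := d - \theta\wedge\,\cdot\,$ and, correspondingly, the Lie derivative by its twisted analogue, which is where the main new input will lie.
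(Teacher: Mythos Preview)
Your proof is correct and follows essentially the same route the paper uses (following McDuff--Salamon): the relative Poincar\'e lemma via the fiberwise retraction of a tubular neighborhood to produce a primitive $\sigma$ vanishing on $Q$, then the Moser trick applied to the linear interpolation $\omega_t$. One side remark: your closing guess that the LCS version proceeds by replacing $d$ with the twisted differential $d_\theta$ is not the route the paper actually takes---instead it passes locally to genuine symplectic representatives $\eta_i = e^{-f_i}\omega_i$, runs exactly your argument there, and then checks that the resulting local Moser vector fields agree on overlaps (because the conformal factors differ only by constants), so that the local diffeomorphisms glue.
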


We are interested in the more general context of locally conformally symplectic (briefly LCS) manifolds, a particular case of almost symplectic manifolds:

\begin{definition}
A manifold $M$ with a non-degenerate two-form $\omega$ is called LCS if there exists a closed one-form $\theta$ such that $d\omega = \theta \wedge \omega$.
\end{definition}

\medskip

The notion first appears as such  in \cite{lib}, it was later studied by J. Lefebvre \cite{lef} and especially I. Vaisman \cite{vaisman}. One can easily see that the  name is justified, as the definition above is equivalent to the existence of an open cover $(U)_\alpha$ and a family of smooth functions $f_\alpha$ on each $U_\alpha$ such that $d (e^{-f_\alpha} \omega  )= 0$ (see \cite{lee}).

\medskip

Our main result reads:

\begin{theorem}
\label{thm:DarbouxWeinsteinLCS}
Let $M$ be a manifold, $\theta_0$ and $\theta_1$ closed $1$-forms and $\omega_0, \omega_1$ $2$-forms on $M$ such that $d_{\theta_i} \omega_i = 0$. Let $Q \subset M$ be a compact submanifold such that $\omega_0$ and $\omega_1$ are nondegenerate and equal on $T_qM$ for all $q \in Q$, and $\theta_{0|TQ} = \theta_{1|TQ}$.

Then there exist $\mathcal{N}_0, \mathcal{N}_1$ neighborhoods of $Q$ and $\varphi : \mathcal{N}_0 \to \mathcal{N}_1$ a diffeomorphism such that 
\[
\varphi^* \omega_1 \sim \omega_0 \ \ \text{and} \ \ \varphi_{|Q} = id.
\]
where by $``\sim"$ we mean conformally equivalent.
\end{theorem}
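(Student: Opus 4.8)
The plan is to reduce Theorem \ref{thm:DarbouxWeinsteinLCS} to the classical Darboux-Weinstein theorem (Theorem \ref{thm:DarbouxWeinstein}) by a conformal change of the two-forms, exploiting the fact that $d_{\theta_i}\omega_i = 0$ means precisely that $e^{-f_i}\omega_i$ is closed wherever $\theta_i = df_i$. Globally $\theta_i$ need not be exact, but $\theta_0$ and $\theta_1$ agree on $TQ$; the first step is therefore to find, on a neighborhood of $Q$, a single smooth function $g$ with $dg = \theta_1 - \theta_0$ along $Q$ — actually I want more, namely to replace $\theta_1$ by $\theta_0$ outright. Since $\theta_1 - \theta_0$ is closed and restricts to zero on $TQ$, by a tubular-neighborhood / Poincaré-lemma argument (retracting a neighborhood of $Q$ onto $Q$) there is a function $h$ near $Q$ with $dh = \theta_1 - \theta_0$ and $h|_Q = 0$, $dh|_{TQ} = 0$. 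Setting $\tilde\omega_1 := e^{-h}\omega_1$ we get $d_{\theta_0}\tilde\omega_1 = e^{-h}d_{\theta_1}\omega_1 = 0$, while on $T_qM$ for $q\in Q$ we have $\tilde\omega_1 = \omega_1 = \omega_0$ since $h$ and $dh$ vanish on $Q$. Thus we may assume from the start that $\theta_0 = \theta_1 =: \theta$.

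With a common Lee form $\theta$, the second step is to linearize the conformal weight. The operator $d_\theta = d - \theta\wedge\cdot$ is a flat connection on $\Lambda^\bullet M$ twisted by the line bundle $L_\theta$ associated to $\theta$; a $d_\theta$-closed two-form is the same as a closed section of $\Lambda^2\otimes L_\theta$. Rather than trivializing $L_\theta$ globally, I would work locally: cover a neighborhood of $Q$ by opens $U_\alpha$ on which $\theta = df_\alpha$, so that $\omega_i^\alpha := e^{-f_\alpha}\omega_i$ are honest closed two-forms, nondegenerate and equal on $T_qM$ for $q\in Q\cap U_\alpha$. The classical Moser trick applied with $\omega_t^\alpha = (1-t)\omega_0^\alpha + t\omega_1^\alpha = e^{-f_\alpha}\omega_t$ produces a vector field $X_t$ supported near $Q$, vanishing on $Q$, with $\mathcal L_{X_t}\omega_t^\alpha = \omega_0^\alpha - \omega_1^\alpha$. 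The key observation is that $X_t$ is independent of $\alpha$: indeed $X_t$ is determined by the equation $\iota_{X_t}\omega_t^\alpha = -\sigma^\alpha$ where $d\sigma^\alpha = \omega_0^\alpha-\omega_1^\alpha$ and $\sigma^\alpha$ is chosen via the homotopy operator on the tubular neighborhood; since $\omega_0^\alpha - \omega_1^\alpha = e^{-f_\alpha}(\omega_0-\omega_1)$ and $\omega_0-\omega_1$ is $d_\theta$-closed and vanishes on $TQ|_Q$, one can instead produce a global $d_\theta$-primitive $\sigma$ of $\omega_0 - \omega_1$ near $Q$ (again by the twisted Poincaré lemma relative to $Q$, using the deformation retraction), set $\iota_{X_t}\omega_t = -\sigma$, and check that locally $\sigma = e^{f_\alpha}\sigma^\alpha$ so that the two prescriptions of $X_t$ coincide. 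Hence $X_t$ is well-defined globally near $Q$, vanishes on $Q$, and its time-one flow $\varphi$ is defined on a possibly smaller neighborhood $\mathcal N_0$ of $Q$ with $\varphi|_Q = \id$.

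The third step is to verify that $\varphi$ does the job up to conformal equivalence. On each $U_\alpha$, by construction $\frac{d}{dt}\varphi_t^*\omega_t^\alpha = \varphi_t^*(\mathcal L_{X_t}\omega_t^\alpha + \frac{d}{dt}\omega_t^\alpha) = 0$, so $\varphi^*\omega_1^\alpha = \omega_0^\alpha$, i.e. $\varphi^*(e^{-f_\alpha}\omega_1) = e^{-f_\alpha}\omega_0$ on $U_\alpha$. Since $\varphi$ is isotopic to the identity rel $Q$ and $\theta$ is closed, $\varphi^*f_\alpha - f_\alpha$ is (locally) constant only if $\varphi^*\theta = \theta$; more honestly, $\varphi^*\theta - \theta = d(\text{something})$ and one computes $\varphi^*(e^{-f_\alpha}\omega_1) = e^{-\varphi^*f_\alpha}\varphi^*\omega_1$, whence $\varphi^*\omega_1 = e^{\varphi^*f_\alpha - f_\alpha}\omega_0$ on $U_\alpha$. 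The function $u := \varphi^*f_\alpha - f_\alpha$ is independent of $\alpha$ (two choices of $f_\alpha$ on an overlap differ by a constant, which cancels), hence defines a global smooth function $u$ near $Q$ with $\varphi^*\omega_1 = e^u\omega_0$; that is, $\varphi^*\omega_1 \sim \omega_0$, and on $Q$ one has $u|_Q = 0$ as well. Finally one checks the Lee form transforms consistently: $d_{\theta}(e^u\omega_0)=0$ forces $du$ to be built from $\theta$'s, confirming the conformal relation is the right one.

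The main obstacle I anticipate is the global coherence of the conformal factor and of $X_t$: the forms $\omega_i$ are not closed, only $d_\theta$-closed, so neither the Moser primitive nor the Poincaré lemma can be applied naively, and one must either pass to the twisted de Rham complex $(\Lambda^\bullet\otimes L_\theta, d_\theta)$ and prove a relative (to $Q$) twisted Poincaré lemma — the twisted cohomology of a tubular neighborhood of $Q$ relative to $Q$ vanishes because $L_\theta$ is flat and the neighborhood retracts to $Q$ — or patch the local closed forms $e^{-f_\alpha}\omega_i$ and verify that all the classical constructions glue. Getting this bookkeeping right, and in particular checking that the vector field $X_t$ produced on each chart genuinely agrees on overlaps (so that its flow is globally defined), is where the real work lies; once that is in place the conclusion follows from the Moser argument exactly as in the symplectic case.
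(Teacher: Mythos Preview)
Your proof is correct, and its core---pass locally to honest symplectic forms $e^{-f_\alpha}\omega_i$, run the Moser argument with the homotopy operator coming from a fixed tubular retraction, then verify that the resulting Moser vector field is independent of the chart---is exactly the paper's strategy. The one genuine difference is your preliminary reduction: you first absorb $\theta_1-\theta_0$ into a conformal change $\tilde\omega_1=e^{-h}\omega_1$ via the relative Poincar\'e lemma, so that from then on there is a \emph{single} Lee form $\theta$ and a single family of primitives $f_\alpha$. The paper does not do this; it keeps both Lee forms and carries two families $f_0^\alpha,\,f_1^\alpha$ throughout, normalized so that $f_0^\alpha=f_1^\alpha$ on $Q\cap U_\alpha$. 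Its gluing step then rests on the little lemma that the \v Cech constants $c_{\alpha\beta}=f_0^\alpha-f_0^\beta$ and $c'_{\alpha\beta}=f_1^\alpha-f_1^\beta$ coincide (forced by the normalization on $Q$), which is precisely what makes $\eta_t^\alpha$, $\sigma^\alpha$, and hence $Y_t^\alpha$ scale by the \emph{same} factor on overlaps. Your reduction renders that lemma unnecessary---one family, one cocycle---at the price of an extra Poincar\'e-lemma step up front. Both routes lean on the same global ingredient (the fixed metric and retraction $\phi_t$) to make the homotopy operator canonical enough to glue, and both land on the same conformal factor $u=f_\alpha\circ\varphi-f_\alpha$, well-defined because the $f_\alpha$ differ by constants on overlaps.
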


\medskip

We shall end with an application concerning the behavior of any LCS form near a Lagragian submanifold, thus extending a theorem due to Weinstein \cite{weinstein}.	

\section {Proof of the main theorem}

We shall heavily rely in our own proof on the intricacies of the original Darboux-Weinstein argument, as presented in \cite[Lemma 3.14, pages 93-95]{mcduff}. One of the instruments of both proofs is the so-called Moser Trick, which we therefore explain briefly\footnote{The Moser Trick was  extended to LCS geometry, \cite{bk}, but our proof uses the original  symplectic version.}:

\begin{theorem}
\label{thm:MoserTrick}
Let $M$ be a compact manifold and $(\omega_t)_{0 \le t \le 1}$ a smooth family of symplectic forms on $M$ satisfying 
\[
\frac{d}{dt} \omega_t = d\sigma_t
\]
for $\sigma_t$ varying smoothly. 

Then there is an isotopy $\varphi_t$ such that $\varphi_t^* \omega_t = \omega_0$ with $\varphi_0 = id$.

\begin{proof}[\bf Proof]
Choose the vector fields $Y_t$ uniquely satisfying 
\begin{equation}
\label{eq:MoserFormula}
i_{Y_t} \omega_t = - \sigma_t
\end{equation}
and its integral curves $\varphi_t$ (i.e. $\frac{d}{dt} \varphi_t = Y_t \circ \varphi_t$ and $\varphi_0 = id$), defined overall. What we get is 
\[
\frac{d}{dt}\varphi_t^* \omega_t = \varphi_t^* (\frac{d}{dt} \omega_t + \mathcal{L}_{Y_t} \omega_t) = \varphi_t^* (d\sigma_t + d i_{Y_t} \omega_t) = 0,
\]
hence $\varphi_t^* \omega_t = \omega_0$.
\end{proof}
\end{theorem}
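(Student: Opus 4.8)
The plan is to realize $\varphi_t$ as the flow of a time-dependent vector field and to pin down that field by differentiating the target identity $\varphi_t^* \omega_t = \omega_0$, in the spirit of Moser's deformation method. First I would suppose that $\varphi_t$ is the flow of a smooth family of vector fields $Y_t$, meaning $\frac{d}{dt}\varphi_t = Y_t \circ \varphi_t$ and $\varphi_0 = \id$, and then differentiate the pullback. Using that the exterior derivative commutes with pullback together with the Lie-derivative transport formula, one obtains
\[
\frac{d}{dt}\bigl(\varphi_t^* \omega_t\bigr) = \varphi_t^*\Bigl(\frac{d}{dt}\omega_t + \mathcal{L}_{Y_t}\omega_t\Bigr),
\]
so it suffices to make the parenthesized expression vanish for every $t$.

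Next I would rewrite $\mathcal{L}_{Y_t}\omega_t$ via Cartan's formula as $d\,i_{Y_t}\omega_t + i_{Y_t}\,d\omega_t$ and discard the last term, since each $\omega_t$ is symplectic and hence closed. Combined with the hypothesis $\frac{d}{dt}\omega_t = d\sigma_t$, the vanishing condition reduces to $d\bigl(\sigma_t + i_{Y_t}\omega_t\bigr) = 0$. Rather than solve this closed-form equation directly, I would impose the stronger pointwise equation \eqref{eq:MoserFormula}, namely $i_{Y_t}\omega_t = -\sigma_t$; nondegeneracy of $\omega_t$ makes this uniquely solvable for $Y_t$ at each point, and smooth dependence of $\omega_t$ and $\sigma_t$ on $(t,x)$ yields a smooth family $Y_t$.

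The single analytic point that genuinely needs an argument is that the flow $\varphi_t$ is defined on the entire interval $[0,1]$. Here I would invoke compactness of $M$: a smooth time-dependent vector field on a compact manifold is complete, so its integral curves exist for all $t\in[0,1]$ and assemble into a bona fide isotopy with $\varphi_0 = \id$. Feeding the chosen $Y_t$ back into the differentiated identity, the integrand vanishes identically, whence $\frac{d}{dt}(\varphi_t^*\omega_t) = 0$ and therefore $\varphi_t^*\omega_t = \varphi_0^*\omega_0 = \omega_0$, as desired. The main obstacle, modest in this setting, is precisely this completeness issue: without compactness one would instead need a uniform bound on $Y_t$ to guarantee that the isotopy survives up to time $1$.
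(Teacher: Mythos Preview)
Your proposal is correct and follows essentially the same Moser-trick argument as the paper: define $Y_t$ by $i_{Y_t}\omega_t = -\sigma_t$, take its flow $\varphi_t$, and differentiate $\varphi_t^*\omega_t$ using the transport formula and Cartan's identity to see it is constant. Your write-up is in fact slightly more explicit than the paper's, spelling out the use of $d\omega_t = 0$, the role of nondegeneracy in solving for $Y_t$, and the appeal to compactness for completeness of the flow.
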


\begin{proof}[\bf Proof of \ref{thm:DarbouxWeinsteinLCS}]
We begin by fixing a Riemannian metric on $M$, which we shall use  to construct a tubular neighborhood of $Q$ in $M$, together with a family of diffeomorphisms representing a deformation retract onto $Q$. 

Take
\[
U_\epsilon = \{ (q, v) \in Q \times TM \ | \ v \in (T_qQ)^\perp \ \text{and} \ \| v \| < \epsilon \},
\]
where the norm is given by the fixed Riemannian metric. Since $Q$ is compact, for a sufficiently small $\epsilon$, the exponential is a diffeomorphism from $U_\epsilon$ to a neighborhood of $Q$ which we denote $\mathcal{M}_0$. We may define 
\[
\phi_t : \mathcal{M}_0 \to \mathcal{M}_0, \ \phi_t (\exp(q, v)) = \exp (q, tv), \ 0 \le t \le 1,
\]
which are diffeomorphisms onto their image, except for $\phi_0$, which collapses the tubular neighborhood onto $Q$. With that in mind, the vector fields
\[
X_t = \frac{d}{dt} \phi_t \circ \phi_t^{-1}
\]
are correctly defined for $0 < t \le 1$ and their integral curves are $\phi_t$.

Let $q \in Q$. We may find $V \supset \overline{U} \supset U \ni q$ small enough that $\omega_0$ and $\omega_1$ are nondegenerate, $\theta_0 = df_0$ and $\theta_1 = df_1$ on $V$; we choose $f_0$ and $f_1$ such that $f_0(q) = f_1(q)$. By our assumptions,
\[
df_{0|TQ} = \theta_{0|TQ} = \theta_{1|TQ} = df_{1|TQ},
\]
so $f_0 = f_1$ on $Q \cap V$. Consider the symplectic forms conformal to $\omega_0$ and $\omega_1$ on $V$:
\begin{align*}
\eta_0 &= e^{-f_0} \omega_0 \\
\eta_1 &= e^{-f_1} \omega_1.
\end{align*}
We can see from the above that $\eta_0$ and $\eta_1$ agree on $T_qM$ for every $q \in Q \cap V$. Let 
\[
W_\delta = \{ (q, v) \in (Q \cap U) \times TM \ | \ v \in (T_qQ)^\perp \ \text{and} \ \| v \| < \delta \};
\]
for $\delta$ sufficiently small (and smaller than the $\epsilon$ determined previously for the entire $Q$), $\exp$ is a diffeomorphism from $W_\delta$ to its image, which we denote $\mathcal{N}$ (note that this is a neighborhood of $Q \cap U$, though not a tubular one, $\mathcal{N} \subset \mathcal{M}_0$ and $\phi_t(\mathcal{N}) \subset \mathcal{N}$). We may also assume, picking a smaller $\delta$ if necessary, that $\mathcal{N} \subset V$ (see the figure below). 

\begin{figure}[H]
	\centering
	\includegraphics[width=0.5\textwidth]{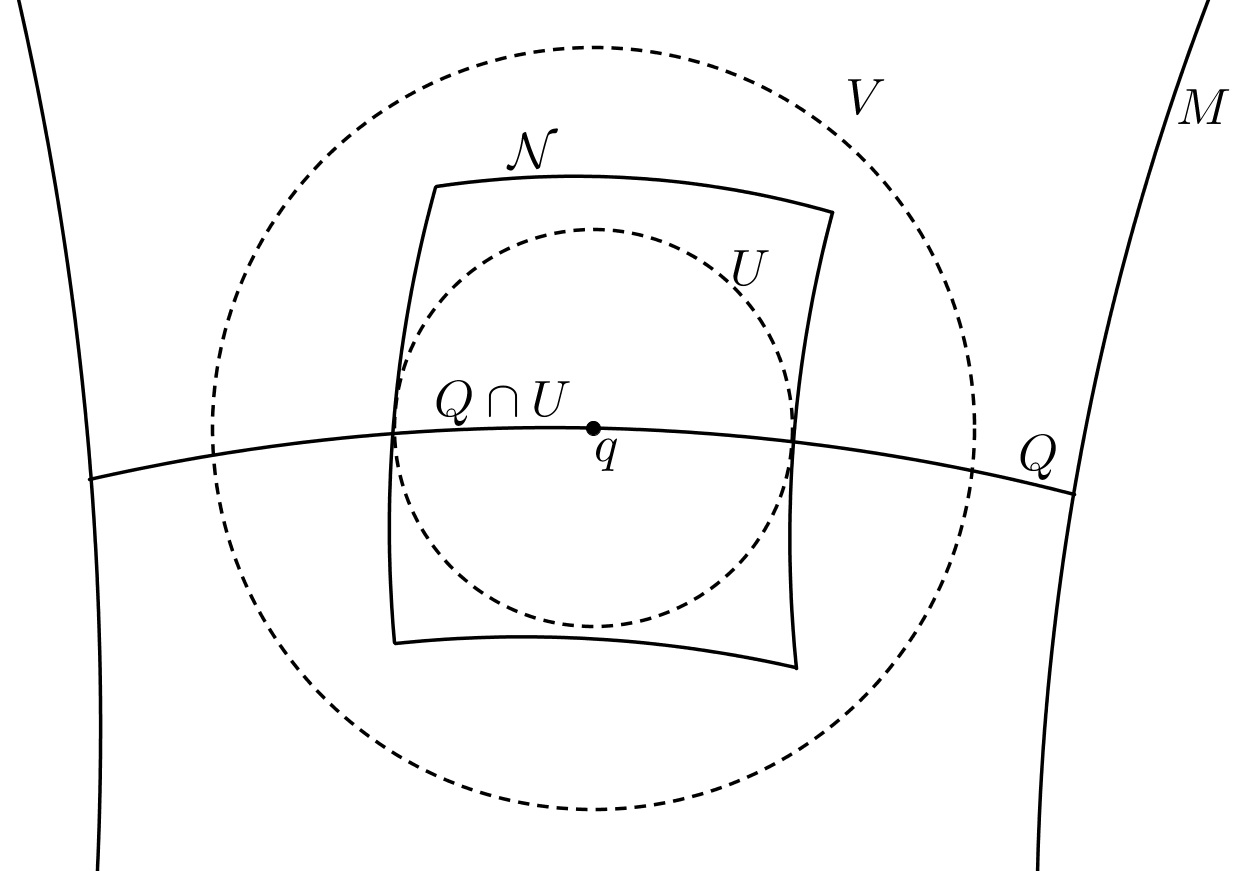}
\end{figure}


Denoting by $\tau := \eta_1 - \eta_0$, we have $\phi_0^* \tau = 0$ and obviously $\phi_1^* \tau = \tau$. Therefore
\[
\tau = \int_0^1 \frac{d}{dt} \phi_t^* (\tau) dt = d \int_0^1 \phi_t^* (i_{X_t} \tau) dt;
\]
let $\rho_t := \phi_t^* (i_{X_t} \tau)$. Explicitly, 
\[
(\rho_t)_p (v) = \tau_{\phi_t(p)} (X_t (\phi_t(p)), \phi_{t*} (v) ) = \tau_{\phi_t(p)} (\frac{d}{dt} \phi_t(p), \phi_{t*} (v) ),
\]
which is correctly defined in $t = 0$. Observe that for $p = q \in Q$, since $\phi_t(q) = q$, we have $(\rho_t)_q = 0$. Taking
\[
\sigma = \int_{0}^{1} \rho_t \ dt,
\]
we have obtained a one-form $\sigma$ on $\mathcal{N}$, null on $Q \cap U$, such that $\eta_1 - \eta_0 = d\sigma$. 

We now turn to the Moser Trick (\ref{thm:MoserTrick}) for the segment of forms $\eta_t = \eta_0 + t (\eta_1 - \eta_0)$, noticing that $\frac{d}{dt} \eta_t = d\sigma$. We may shrink the neighborhood and assume that $\omega_t$ are non-degenerate and that the integral curves obtained are defined on $[0,1]$. We thus get $\varphi : U_q \to U_q'$ (neighborhoods of $Q \cap U$) with $\varphi^* \eta_1 = \eta_0$ and $\varphi_{|Q \cap U} = id$.

We conclude that 
\[
\omega_0 = e^{f_0} \eta_0 = e^{f_0} \varphi^* \eta_1 = e^{f_0 - f_1 \circ \varphi} \varphi^* \omega_1
\]
on the neighborhood $U_q$ of $Q \cap U$.

We have obtained the result we wanted locally on $Q$, by applying (essentially) the Darboux-Weinstein technique on patches of $Q$. Of course, we want the local diffeomorphisms that we have constructed, as well as the conformal factors, to agree on the intersections. This does not usually happen; however, in our case, having the benefit of having used a global instrument (namely, the metric on $M$), we will only need a brief overview of the facts to reach this conclusion. 

We can construct a cover $U_\alpha$ of $Q$ in $M$ such that:
\begin{enumerate}
	\item $\theta_0 = df_0^\alpha$ and $\theta_1 = df_1^\alpha$ on $U_\alpha$;
	\item $f_0^\alpha = f_1^\alpha$ on $Q \cap U_\alpha$;
	\item We have the symplectic forms $\eta_0^\alpha = e^{-f_0^\alpha} \omega_0$ and $\eta_1^\alpha = e^{-f_1^\alpha} \omega_1$ on $U_\alpha$;
	\item There is a $1$-form $\sigma^\alpha$ on $U_\alpha$ with $d\sigma^\alpha = \eta_1^\alpha - \eta_0^\alpha$. More precisely, 
	\[
		\sigma^\alpha = \int_{0}^{1} \phi_t^* i_{X_t} (\eta_1^\alpha - \eta_0^\alpha) dt;
	\] 
	\item The vector field $Y^\alpha_t$ on $U_\alpha$ is uniquely determined by the Moser Formula (\ref{eq:MoserFormula}): 
	\begin{equation}
	\label{eq:Moser2}
	i_{Y^\alpha_t} \eta^\alpha_t = - \sigma^\alpha;
	\end{equation}
	where $\eta^\alpha_t = \eta^\alpha_0 + t (\eta^\alpha_1 - \eta^\alpha_0)$.
	\item Lastly, we have a diffeomorphism $\varphi^\alpha : U_\alpha \to U_\alpha'$ such that 
	\begin{align*}
		\omega_0 &= e^{f_0^\alpha - f_1^\alpha \circ \varphi^\alpha} (\varphi^\alpha)^* \omega_1 \ \text{on} \ U_\alpha \\
		\varphi^\alpha_{|U\alpha \cap Q} &= id,
	\end{align*}
	$\varphi^\alpha$ being the integral curve (at time $t = 1$) of $Y^\alpha_t$.
\end{enumerate}
Note that $\phi_t$ and $X_t$, being a byproduct of the chosen metric, \textbf{are independent of} $\mathbf{\alpha}$, varying only in domain in the above expressions.

On $U_\alpha \cap U_\beta$, we have the following: firstly, since $df_0^\alpha = \theta_0 = df_0^\beta$, 
\[
f_0^\alpha = c_{\alpha \beta} + f_0^\beta.
\]
The same is true of the $f_1$-s:
\[
f_1^\alpha = c'_{\alpha \beta} + f_1^\beta.
\]
However, since $f_0^\alpha = f_1^\alpha$ on $Q \cap U_\alpha$ and $f_0^\beta = f_1^\beta$ on $Q \cap U_\beta$, we conclude that $c_{\alpha \beta} = c'_{\alpha \beta}$. We then immediately get
\begin{align*}
\eta_0^\alpha &= e^{-c_{\alpha \beta}} \eta_0^\beta \\
\eta_1^\alpha &= e^{-c_{\alpha \beta}} \eta_1^\beta,
\end{align*}
so 
\[
\eta^\alpha_t = e^{-c_{\alpha \beta}} \eta^\beta_t \ \text{and} \ \sigma^\alpha = e^{-c_{\alpha \beta}} \sigma^\beta.
\]

We now see clearly from (\ref{eq:Moser2}) that, on $U_\alpha \cap U_\beta$, the vector fields $Y^\alpha_t$ and $Y^\beta_t$ satisfy the same formula, and must be equal. Then $\varphi^\alpha = \varphi^\beta$ on $U_\alpha \cap U_\beta$, and we can glue them to a global diffeomorphism
\[
\varphi : \mathcal{N}_0 := \bigcup\limits_{\alpha} U_\alpha \to \mathcal{N}_1 := \bigcup\limits_{\alpha} U'_\alpha 
\]
with $\varphi_{|Q} = id$ and 
\[
\omega_0 = e^{f_0^\alpha - f_1^\alpha \circ \varphi} \varphi^* \omega_1 \ \text{on} \ U_\alpha, \ \forall \alpha.
\]
However, it is clear now that the conformal factors are also equal on the intersections:
\[
f_0^\alpha - f_1^\alpha \circ \varphi = c_{\alpha \beta} + f_0^\beta - (c_{\alpha \beta} + f_1^\beta) \circ \varphi = f_0^\beta - f_1^\beta \circ \varphi,
\]
and we have reached our conclusion.
\end{proof}

\medskip

\begin{remark}
\label{rem}
The condition of equality on $T_qM$ of the two LCS forms might seem a bit restrictive. Nevertheless, there are a few cases where it may be lessened to equality on $TQ$, for instance if $Q$ is a point (where the conclusion is an easy consequence of the classical Darboux theorem) or if $Q$ is Lagragian for both $\omega_0$ and $\omega_1$. In the latter case, the proof in \cite[Theorem 8.4, pages 48-49]{daSilva} can be readily adapted to the LCS case, thus reducing the problem to \ref{thm:DarbouxWeinsteinLCS}.
\end{remark}

\section{An application}

In the symplectic case, the last remark has as consequence the following theorem, describing any symplectic form around a Lagrangian submanifold in terms of the standard symplectic form on its cotangent bundle. We state the precise result below, due to Weinstein \cite[Theorem 6.1, pages 338-339]{weinstein}:

\begin{theorem}
\label{thm:WeinsteinTubular}
Let $(M, \omega)$ be a symplectic manifold and $Q \subset M$ a compact Lagrangian submanifold. 

Then there exists a neighborhood $\mathcal{M}$ of $Q$, a neighborhood $\mathcal{N}$ of the zero section in $T^*Q$ and a diffeomorphism $\varphi: \mathcal{M} \to \mathcal{N}$ such that 
\[
\varphi^* \omega_0 = \omega,
\]
where $\omega_0$ is the standard symplectic form on $T^*Q$.
\end{theorem}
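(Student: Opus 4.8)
The plan is to run the classical argument, which is moreover the natural template for the LCS extension this application is really after. The idea is to build a symplectic tubular model of a neighbourhood of $Q$ inside $T^*Q$, thereby reducing the statement to a comparison of two symplectic forms on $T^*Q$ that agree along the zero section, and then to quote the Darboux--Weinstein theorem \ref{thm:DarbouxWeinstein}.

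First I would fix an $\omega$-compatible almost complex structure $J$ on $M$. Since $Q$ is Lagrangian, $TQ$ is a Lagrangian subbundle of $TM|_Q$ and $J(TQ)$ is a Lagrangian complement; then $u \mapsto \omega(u,\cdot)|_{TQ}$ is a bundle isomorphism $J(TQ) \to T^*Q$, which identifies the normal bundle $NQ = TM|_Q/TQ$ with $T^*Q$. By the tubular neighbourhood theorem I then obtain a diffeomorphism $\psi$ from a neighbourhood of the zero section $Q \subset T^*Q$ onto a neighbourhood of $Q$ in $M$ with $\psi|_Q = \id$ and with $d\psi$, restricted to $Q$, realizing the above identification under the canonical splitting $T(T^*Q)|_Q = TQ \oplus T^*Q$. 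Put $\omega_1 := \psi^*\omega$, a symplectic form defined near $Q$ in $T^*Q$.

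The key step is to check that $\omega_0$ and $\omega_1$ coincide on $T_q(T^*Q)$ for every $q \in Q$. Both forms are symplectic; the zero section is Lagrangian for each ($\omega_0$ by construction, $\omega_1$ because $\psi$ preserves $Q$, which is $\omega$-Lagrangian); the fibre directions are Lagrangian for each ($\omega_0$ because cotangent fibres are, $\omega_1$ because $d\psi$ carries them onto $J(TQ)$); and the bilinear pairing induced between the zero section and the fibre agrees, because the identification $NQ \cong T^*Q$ was chosen precisely so that it does --- this is where the signs and conventions have to be pinned down, and it is the only genuinely delicate point of the argument. Since a $2$-form on a vector space that vanishes on two complementary subspaces is completely determined by the pairing between them, it follows that $\omega_0 = \omega_1$ along $Q$.

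Finally I would apply \ref{thm:DarbouxWeinstein} on $T^*Q$ to the pair $\omega_0,\omega_1$ and the compact submanifold $Q$, obtaining neighbourhoods of $Q$ and a diffeomorphism $\chi$ with $\chi^*\omega_1 = \omega_0$ and $\chi|_Q = \id$. Then $\varphi := \chi^{-1}\circ\psi^{-1}$, restricted to a sufficiently small neighbourhood $\mathcal{M}$ of $Q$ in $M$, is a diffeomorphism onto a neighbourhood $\mathcal{N}$ of the zero section with $\varphi|_Q = \id$ and $\varphi^*\omega_0 = (\psi^{-1})^*(\chi^{-1})^*\omega_0 = (\psi^{-1})^*\omega_1 = \omega$, as required. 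I expect the same scheme --- with $(T^*Q,\omega_0)$ replaced by an appropriate LCS model on $T^*Q$ and \ref{thm:DarbouxWeinstein} replaced by \ref{thm:DarbouxWeinsteinLCS}, invoking the Lagrangian reduction observed in \ref{rem} --- to yield the advertised LCS version of Weinstein's theorem.
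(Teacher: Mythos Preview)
Your argument is correct and is essentially the classical proof. Note, though, that the paper does not actually prove \ref{thm:WeinsteinTubular}: it records it as Weinstein's result and then proves only the LCS analogue \ref{thm:WeinsteinTubularLCS}. Your approach nonetheless matches the template of that proof. The paper also identifies the normal bundle of $Q$ with $T^*Q$ and transports the model form to a tubular neighbourhood before invoking the Darboux--Weinstein machinery; the differences are that it uses a Riemannian metric (sending $(q,v)\in (TQ)^\perp$ to $(q,w^*)$ with $g(v,\cdot)=\omega(w,\cdot)$ and $w^*=g(w,\cdot)$) rather than your compatible $J$, and that instead of verifying directly, as you do, that the two forms agree on $T_q(T^*Q)$ along $Q$, it appeals to \ref{rem}, which says that when $Q$ is Lagrangian for both forms the equality-along-$Q$ hypothesis can be arranged by the argument in \cite{daSilva}. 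Your explicit check of the three pieces (both Lagrangian subspaces and the cross pairing) is precisely that argument written out.
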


\medskip

There is an analogue of this result in the LCS case, which uses the LCS structures of the cotangent bundle introduced by S. Haller and T. Rybicki in \cite{haller}: take $\theta$ a closed one-form on $Q$ and $\eta$ the Liouville form on $T^*Q$. Then it can be proven that 
\[
\omega_\theta = d \eta - \pi^* \theta \wedge \eta
\]
is LCS with the Lee form $\pi^* \theta$. It can also be easily seen that the zero section is then Lagrangian. Note that $\omega$ is globally conformally symplectic if and only if $\theta$ is exact.

\medskip

We can now state our extension of the previous theorem to LCS manifolds:

\begin{theorem}
\label{thm:WeinsteinTubularLCS}
Let $(M, \omega)$ be an LCS manifold with Lee form $\theta$ and $Q \subset M$ a compact Lagrangian submanifold. 

Then there exists a neighborhood $\mathcal{M}$ of $Q$, a neighborhood $\mathcal{N}$ of the zero section in $T^*Q$ and a diffeomorphism $\varphi: \mathcal{M} \to \mathcal{N}$ such that 
\[
\varphi^* \omega_\theta = \omega,
\]
where $\omega_\theta$ is the LCS form described above.
\end{theorem}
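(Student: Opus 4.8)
The strategy is to reduce Theorem~\ref{thm:WeinsteinTubularLCS} to the LCS Darboux--Weinstein theorem \ref{thm:DarbouxWeinsteinLCS}, exactly mirroring how the classical tubular neighborhood theorem of Weinstein (Theorem~\ref{thm:WeinsteinTubular}) follows from Theorem~\ref{thm:DarbouxWeinstein} via Remark~\ref{rem}. First I would produce a diffeomorphism $\psi$ from a neighborhood of $Q$ in $M$ onto a neighborhood of the zero section in $T^*Q$ that restricts to the identity on $Q$; the standard way is to use the symplectic (here almost-symplectic) form to identify the normal bundle $TM|_Q / TQ$ with $T^*Q$. Since $Q$ is Lagrangian for $\omega$, the map $v \mapsto \iota_v \omega|_{T_qQ}$ gives a bundle isomorphism $N_q Q \to T_q^*Q$, and composing the exponential map of an auxiliary metric with this identification yields $\psi$. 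Then $\psi^{-1}$ pulls back $\omega_\theta$ to a two-form $\omega_1$ on a neighborhood of $Q$ in $M$ with Lee form $\psi^* \pi^* \theta$, while the original form is $\omega_0 = \omega$ with Lee form $\theta$.

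\textbf{Matching the data along $Q$.} Next I need to check the hypotheses of the reduced statement: that $\omega_0$ and $\omega_1$ can be arranged to agree on $T_qM$ for $q \in Q$ (or at least on $T_qQ$, invoking the Lagrangian case of Remark~\ref{rem}), and that the Lee forms agree on $TQ$. For the Lee forms: under $\psi$, the zero section corresponds to $Q$, and $\pi \circ \psi = \id$ on $Q$, so $\psi^* \pi^* \theta$ and $\theta$ restrict to the same form on $TQ$ — this is where the specific choice $\omega_\theta = d\eta - \pi^*\theta \wedge \eta$ with Lee form $\pi^*\theta$ pays off, since $\theta$ on $Q$ is literally the form used to build $\omega_\theta$. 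For the two-forms: both $Q \subset M$ and the zero section in $T^*Q$ are Lagrangian (the latter as noted in the excerpt), and by the standard linear-algebra argument (cf.\ the proof of Weinstein's theorem in \cite{daSilva}) one can post-compose $\psi$ with a further diffeomorphism fixing $Q$ so that the pulled-back form agrees with $\omega$ to first order along $Q$; alternatively one appeals directly to the Lagrangian version of Remark~\ref{rem}, which only requires equality on $TQ$.

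\textbf{Conclusion and main obstacle.} With these verified, Theorem~\ref{thm:DarbouxWeinsteinLCS} (in its Lagrangian form from Remark~\ref{rem}) supplies a diffeomorphism $\chi$ between neighborhoods of $Q$ in $M$ with $\chi^* \omega_1 \sim \omega_0$ and $\chi|_Q = \id$; that is, $\chi^*\omega_1 = e^{f}\omega_0$ for some function $f$. Setting $\varphi = \psi \circ \chi$ gives $\varphi^* \omega_\theta = e^{f}\omega$, so $\varphi^*\omega_\theta$ is conformal to $\omega$. The last point to address is that the statement asks for $\varphi^*\omega_\theta = \omega$ on the nose, not merely up to conformal factor — but here one observes that a conformal change $\omega \rightsquigarrow e^f\omega$ changes the Lee form by $df$, and since both $\omega$ and $\varphi^*\omega_\theta$ have Lee form equal to $\theta$ (pulled back appropriately) along the relevant neighborhood once $\psi$ is chosen compatibly, $f$ must be locally constant, and normalizing $f \equiv 0$ by adjusting the identification (or simply replacing $\omega_\theta$ by the conformally equivalent representative, which is again of the same type) gives genuine equality. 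The main obstacle I anticipate is precisely this bookkeeping of conformal factors and Lee forms: ensuring that the identification $\psi$ is set up so that the Lee forms match not just on $TQ$ but in a way that forces the Moser-produced conformal factor to be trivial, rather than merely conformally trivial; this requires care but no deep new idea beyond what is already in the proof of \ref{thm:DarbouxWeinsteinLCS}.
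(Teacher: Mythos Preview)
Your core strategy --- identify a tubular neighborhood of $Q$ with a neighborhood of the zero section in $T^*Q$ via the Lagrangian condition and an auxiliary metric, observe that $Q$ is Lagrangian for both forms and that the two Lee forms restrict to the same form on $TQ$, then invoke \ref{thm:DarbouxWeinsteinLCS} through \ref{rem} --- is exactly the paper's argument, and that part of your proposal is correct.

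Where you go beyond the paper is in your third paragraph, and there the reasoning does not hold. From $\varphi^*\omega_\theta = e^{f}\omega$ one gets, by comparing Lee forms, $df = \varphi^*(\pi^*\theta) - \theta$. These two closed $1$-forms agree on $TQ$, so their difference is exact on the tubular neighborhood, but it is \emph{not} zero in general: $\pi^*\theta$ is constant along the fibres of $T^*Q$, whereas $\theta$ on $M$ has no reason to be ``constant in normal directions'' near $Q$. Hence $f$ need not be locally constant, and no clever choice of $\psi$ repairs this, since the Moser diffeomorphism $\chi$ produced by \ref{thm:DarbouxWeinsteinLCS} will in any case pull back the Lee form nontrivially off $Q$. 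Nor is $e^g\omega_\theta$ ``again of the same type'' for general $g$. The paper's own proof stops right after invoking \ref{thm:DarbouxWeinsteinLCS} and does not address this upgrade either, so what both arguments actually deliver is $\varphi^*\omega_\theta \sim \omega$, the natural conclusion in the LCS category.
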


\begin{proof}[\bf Proof]
We first wish to transport the form $\omega_\theta$ from $T^*Q$ to a neighborhood of $Q$ in $M$. Fix a Riemannian metric on $M$; we then have a canonical isomorphism of vector bundles between $(TQ)^\perp$ and $T^*Q$, given by:
\[
(TQ)^\perp \ni (q, v) \mapsto (q, w^*)
\]
\noindent where $w$ is uniquely found by $g(v, \cdot) = \omega (w, \cdot)$ and $w^* = g(w, \cdot)$ (the key point in this identification is the fact that $T_qQ$ is Lagrangian in $T_qM$ for each $q \in Q$).

Furthermore, by means of the exponential map, a neighborhood of $Q$ in $M$ is diffeomorphic to a neighborhood of the zero section in $(TQ)^\perp$. Consequently, we can transport the form $\omega_\theta$ to a neighborhood $\mathcal{U}$ of $Q$. 

We need only remember that $Q$ is also Lagrangian for this new form and apply theorem \ref{thm:DarbouxWeinsteinLCS} in light of  \ref{rem} to complete the proof.
\end{proof}

\end{document}